\newtheorem{theorem}{Theorem}[section]
\newtheorem{lemma}[theorem]{Lemma}
\newtheorem{proposition}[theorem]{Proposition}
\newtheorem{corollary}[theorem]{Corollary}
\theoremstyle{definition}
\newtheorem{definition}[theorem]{Definition}
\newtheorem{example}[theorem]{Example}
\theoremstyle{remark}
\numberwithin{equation}{section}
\begin{document}

\title{ Supercyclicity of the left and right multiplication operators on Banach ideal of operators}
\author{ Mohamed Amouch and Hamza Lakrimi}
\address{Mohamed Amouch and Hamza Lakrimi,
University Chouaib Doukkali.
Department of Mathematics, Faculty of science
Eljadida, Morocco}
\email{amouch.m@ucd.ac.ma}
\email{hamza.lakrimi.hafdi@gmail.com}
\subjclass[2000]{Primary 47A80, 47A53; secondary 47A10, 47A11.}
\keywords{Hypercyclicity, Supercyclicity, left multiplication, right multiplication, tensor product, Banach ideal of operators}

\begin{abstract} Let $X$ be  a Banach space with $\dim X>1$ such that $X^{\ast}$, its dual, is separable and  $\mathcal{B}(X)$ the algebra of bounded linear operators on $X$. In this paper, we study the passage of property of being supercyclic from an operator $T\in\mathcal{B}(X)$  to the left and right  multiplication induced by $T$ on separable admissible Banach ideal of $\mathcal{B}(X)$. We give  a sufficient condition for the tensor product $T\widehat{\otimes}R$ of two operators to be  supercyclic. As a consequence,  we give another  equivalent conditions for the Supercyclicity Criterion.
\end{abstract}


\maketitle

\section{Introduction and Preliminary}
Throughout the paper, we denote by $X$ a Banach space  with $\dim X>1$ such that $X^{\ast}$, its dual, is separable, $\mathcal{B}(X)$  denote the algebra of bounded linear operators on $X$ and $\mathcal{K}(X)$ denote the algebra of compact
operators on $X$. From \cite[Proposition 2.8]{Fabian}, if $X^{\ast}$ is separable, then $X$ is separable. Let  $T\in\mathcal{B}(X)$, the orbit of a vector $x\in X$ under $T$ is the set $$Orb(T, x):=\{T^{n}x : \ n\in\mathbb{N}\}.$$
An operator  $T\in\mathcal{B}(X)$ is said to be hypercyclic if there is some vector $x\in X$ such that $Orb(T, x)$ is dense in $X$; such a vector $x$ is called a hypercyclic vector for $T$. Similarly, $T\in\mathcal{B}(X)$ is said to be supercyclic if there is some vector $x\in X$ such  that $$\mathbb{C}Orb(T, x):=\{\alpha T^{n}x : \ \alpha\in\mathbb{C}, n\in\mathbb{N}\}$$  is dense in $X$; such a vector  $x$ is called supercyclic vector for $T$. From  \cite{H. M. Hilden}, an operator $T$ on $X$ is supercyclic if and only if for each pair $(U, V )$ of nonempty open subsets of $X$ there exist $\alpha\in\mathbb{C}$ and $n\in\mathbb{N}$ such that
 $$\alpha T^{n}(U)\cap V\neq\emptyset.$$
There is a characterization of hypercyclicity called  hypercyclicity criterion. It provides several sufficient conditions that ensure hypercyclicity. This criterion has been established by Carol Kitai in 1982 \cite{C. Kitai}. It is improved by several authors afterwards as Gethner and Shapiro  \cite{Gethner RM Shapiro JH} in 1987  and Juan Bes \cite{Bès JP} in 1998.  Recall that  $T\in \mathcal{B}(X)$ satisfy the hypercyclicity criterion, if there exist two dense subsets $Y$ and $Z$ in $X$, a strictly increasing sequence $(n_{k})_{k\geq0}$ of positive integers and mappings $S_{n_{k}}$ : $Z\longrightarrow X$ such that :
\begin{enumerate}
  \item[$(i)$]$T^{n_{k}}x\longrightarrow0$ for any $x\in Y$;
  \item[$(ii)$]$(S_{n_{k}}y)_{k}$ converges to 0 for any $y\in Z$;
  \item[$(iii)$]$(T^{n_{k}}S_{n_{k}}y)_{k}$ converges to $y$ for any $y\in Z$.
\end{enumerate}
 In the same way, Salas in 1999 gave a characterization of supercyclic bilateral backward weighted shifts via the Supercyclicity Criterion, that is a sufficient condition for supercyclicity \cite{salas}.  We say  $T\in \mathcal{B}(X)$ satisfy the supercyclicity criterion, if there exist two dense subsets $D_1$ and $D_2$ in $X$, a sequence $(n_{k})_{k\geq0}$  of positive integers, and also there exist a mappings $S_{n_{k}}$ : $D_2\longrightarrow X$ such that :
\begin{enumerate}
  \item[$(i)$]$\|T^{n_{k}}x\|\|S_{n_{k}}y\|\longrightarrow 0$ for every $x\in D_1$ and $y\in D_2$;
  \item[$(ii)$]$T^{n_{k}}S_{n_{k}}y\longrightarrow y$ for every $y\in D_2$.
  \end{enumerate}
For a more general overview of hypercyclicity, supercyclicity and related properties in linear dynamics, we refer to \cite{Bayart F, T. Bermudez, N. S. Feldman, Grosse-Erdmann KG, H. M. Hilden, Montes-Rodriguez, Rolewics S}.\\
The left multiplication operator $L_{T} : \mathcal{B}(X)\longrightarrow\mathcal{B}(X) $ induced by a fixed
bounded linear operator $T\in\mathcal{B}(X)$ is defined as $L_{T}(A)=TA$ and the right multiplication operator $R_{T} : \mathcal{B}(X)\longrightarrow\mathcal{B}(X) $ induced by a fixed
bounded linear operator $T\in\mathcal{B}(X)$ is defined as $R_{T}(A)=AT$ where $A\in\mathcal{B}(X)$. Recall \cite{Gohberg} that $(J, \|.\|_{J} )$ is said to be a Banach ideal of $\mathcal{B}(X)$ if :
 \begin{enumerate}
   \item[$(i)$] $J\subset\mathcal{B}(X)$ is a linear subspace;
   \item[$(ii)$] The norm $\|.\|_{J}$ is complete in $J$ and $\|S\|\leq\|S\|_{J}$ for all $S\in J $;
   \item[$(iii)$] $\forall S\in J$, $\forall A, B\in\mathcal{B}(X)$, $ASB\in J$ and  $\|ASB\|_{J}\leq\|A\|\|S\|_{J}\|B\|$;
   \item[$(iv)$] The rank one operators $x\otimes x^{\ast}\in J$ and $\|x\otimes x^{\ast}\|_{J} = \|x\|\|x^{\ast}\|$ for all  $x\in X$ and $x^{\ast}\in X^{\ast}$.
 \end{enumerate}
A rank one operator defined on $X$ as $(x\otimes x^{\ast})(z)=\langle x^{\ast}, z\rangle x=x^{\ast}(z)x$ for all $x\in X$, $x^{\ast}\in X^{\ast}$ and any $z\in X$. The space of finite rank operators $\mathcal{F}(X)$ is defined as the linear span of the rank one operators, that is
 $\mathcal{F}(X)=\{\sum_{i=1}^{m}x_{i}\otimes x_{i}^{\ast}, \ x_{i}\in X, \ x_{i}^{\ast}\in X^{\ast}, m\geq 1\}$. A Banach ideal $J$ of operators is said to be admissible if $\mathcal{F}(X)$ is dense in $J$ with respect to the norm $\|.\|_{J}$. Note that   $L_{T} : J\longrightarrow J$ and  $R_{T} : J\longrightarrow J$  are well-defined and we have  for all $A\in J$ :
 $$\|L_{T}(A)\|_{J}=\|TA\|_{J}\leq\|T\|\|A\|_{J} \ \ ;  \ \ \|R_{T}(A)\|_{J}=\|AT\|_{J}\leq\|T\|\|A\|_{J}.$$
 The study of hypercyclicity in operator algebras was initiated by Kit Chan in 1999 \cite{CH}, who showed that hypercyclicity can occur on the operator algebra $\mathcal{B}(H)$ with strong operator topology, when  $H$ is a separable Hilbert space. Subsequently  his idea was used by several authors, see for examples \cite{Jose bonet,Chan2001, Gupta2016, Petterson, Youseffi}. Recently, Gilmore et al in \cite{Gilmore2019,Gilmore2017}, investigate and study the hypercyclicity properties of the commutator maps $L_{T}-R_{T}$ and the generalised Derivations   $L_{A}-R_{B}$ built from the basic multiplications acting on separable Banach ideals of operators.\\
  The motivation of this study is that  Bonet  et al.  \cite{Jose bonet} use tensor product techniques developed in
  \cite{A. Peris} to  characterized the  hypercyclicity of the left and the right multiplication operators on  an admissible Banach ideal of operators.\\
  Let $E$ and $F$ be normed linear spaces. Recall \cite{R. A. Ryan} that  the projective tensor norm on $E\bigotimes F$ is the function $\Pi : E\bigotimes F\longrightarrow [0, +\infty[$ defined for all $z\in E\bigotimes F$ $$\Pi(z):=\inf\{\sum_{j=1}^{n}\|x_{j}\|\|y_{j}\| \ : \ z=\sum_{j=1}^{n}x_{j}\otimes y_{j} \}.$$
  For elementary tensors $z=x\otimes y$ we just have $\Pi(z)=\|x\|\|y\|$, with this topology the space is denoted by $E\bigotimes_{\pi}F$, and its completion by $E\widehat{\bigotimes_{\pi}}F$. For a more general overview of the projective tensor norm  and  its related properties we refer to \cite{H. Jarchow, R. A. Ryan}.\\
  The purpose of this paper is to characterize the supercyclicity of the left and the right multiplication on a separable admissible  Banach ideal of operators and give a sufficient condition for the tensor product $T\widehat{\otimes}R$ of two operators to be  supercyclic and some equivalent conditions for the Supercyclicity Criterion.\\
 In section 2, we  study the passage of property of being supercyclic from $T\in\mathcal{B}(X)$  to the left and the right  multiplication induced by $T$ on a separable admissible Banach ideal $J$ of $\mathcal{B}(X)$. So, we prove that :
\begin{enumerate}
  \item[$(i)$] $T$ satisfies the supercyclicity criterion on $X$ if and only if  $L_{T}$ is supercyclic on $(J, \|.\|_{J})$.
  \item[$(ii)$] $T^{\ast}$ satisfies the supercyclicity criterion on $X^{\ast}$ if and only if $R_{T}$ is supercyclic on $(J, \|.\|_{J})$.
  \end{enumerate}
  In section 3, we give  a sufficient condition for the tensor product $T\widehat{\otimes}R$ of two operators to be  supercyclic. As a consequence,  we give some equivalent conditions for the Supercyclicity Criterion.
\section{Supercyclicity of the left and right multiplication on Banach ideal of operators.}
We begin this section  with the following lemma which will be used in sequel.
\begin{lemma}\label{lemma1}
Let  $X$ be  Banach space, $X^{\ast}$, its dual, is separable, $(J, \|.\|_{J})$ an admissible Banach ideal of $\mathcal{B}(X)$. If  $D$ and $\Phi$ are a countable dense subsets of $X$ and $X^{\ast}$, respectively. Then the set
$$\mathcal{X}:=span\{x\otimes \phi /  x\in D ,  \phi\in \Phi\}$$
is a countable dense subset of $J$ with respect to $\|.\|_{J}$-topology.
\end{lemma}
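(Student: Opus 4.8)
The plan is to reduce the density claim to the approximation of a single rank-one operator, exploiting admissibility of $J$ together with property (iv) of the ideal norm. First I would dispose of countability: the generating family $\{x\otimes\phi : x\in D,\ \phi\in\Phi\}$ is a countable set (indexed by the countable set $D\times\Phi$), so taking linear combinations with coefficients in the countable field $\mathbb{Q}+i\mathbb{Q}$ produces a countable set, and these rational combinations are $\|\cdot\|_J$-dense in the full complex span; hence no generality is lost in proving density for $\mathcal{X}$. It then remains to show that $\mathcal{X}$ is $\|\cdot\|_J$-dense in $J$.

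For this I would use admissibility in an essential way: since $\mathcal{F}(X)$ is $\|\cdot\|_J$-dense in $J$, it suffices to approximate an arbitrary finite rank operator by elements of $\mathcal{X}$. As every finite rank operator has the form $\sum_{i=1}^m u_i\otimes\psi_i$ and $\mathcal{X}$ is a linear span, the triangle inequality reduces the problem further to approximating a single arbitrary rank-one operator $u\otimes\psi$ (with $u\in X$, $\psi\in X^{\ast}$) by a generator $x\otimes\phi$ having $x\in D$ and $\phi\in\Phi$.

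The key estimate rests on property (iv). For any $x\in D$ and $\phi\in\Phi$ I would split
$$u\otimes\psi-x\otimes\phi=(u-x)\otimes\psi+x\otimes(\psi-\phi),$$
and apply (iv) to each rank-one term to obtain
$$\|u\otimes\psi-x\otimes\phi\|_{J}\leq\|u-x\|\,\|\psi\|+\|x\|\,\|\psi-\phi\|.$$
Since $D$ is dense in $X$ I can make $\|u-x\|$ as small as desired, which in particular keeps $\|x\|\leq\|u\|+1$ bounded; then, since $\Phi$ is dense in $X^{\ast}$, I can make $\|\psi-\phi\|$ arbitrarily small. This is the heart of the argument: the identity $\|a\otimes b\|_{J}=\|a\|\,\|b\|$ collapses approximation in the possibly exotic ideal norm $\|\cdot\|_{J}$ to the familiar approximations carried out separately in $X$ and in $X^{\ast}$.

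Finally I would assemble the pieces with a standard $\varepsilon/2$ argument. Given $S\in J$ and $\varepsilon>0$, admissibility yields $F=\sum_{i=1}^m u_i\otimes\psi_i$ with $\|S-F\|_{J}<\varepsilon/2$; approximating each $u_i\otimes\psi_i$ to within $\varepsilon/(2m)$ by a generator $x_i\otimes\phi_i$ and summing gives $\|F-\sum_{i=1}^m x_i\otimes\phi_i\|_{J}<\varepsilon/2$, so that $\|S-\sum_{i=1}^m x_i\otimes\phi_i\|_{J}<\varepsilon$ with $\sum_{i=1}^m x_i\otimes\phi_i\in\mathcal{X}$. I do not expect a serious obstacle; the only points demanding care are the choice of scalar field to secure genuine countability and the control of $\|x_i\|$ when invoking the rank-one estimate, both of which are routine.
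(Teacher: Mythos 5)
Your proposal is correct and follows essentially the same route as the paper: reduce via admissibility to finite rank operators, then approximate each rank-one summand using the add-and-subtract decomposition of $u\otimes\psi-x\otimes\phi$ into two rank-one terms and property $(iv)$ of the ideal norm, finishing with an $\varepsilon/2$ argument. The only difference is cosmetic (you place the perturbation on the other factor in the splitting), and your explicit treatment of countability via rational coefficients is in fact slightly more careful than the paper, which takes the countability of the span for granted.
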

\begin{proof}
Let $T\in J$. If $\epsilon>0$ is  arbitrary, then there is a finite rank operator $F$ such that $\|T-F\|_{J}<\frac{\epsilon}{2}$. Let $F=\displaystyle\sum_{i=1}^{N}\alpha_{i}a_{i}\otimes \varphi_{i}$, where $a_{i}\in X$,  $\varphi_{i}\in X^{\ast}$ and $\alpha_{i}\in \mathbb{C}$ for $i=1, 2, ..., N$. For every $i\in\{1, 2, ..., N\}$, there exist some $\phi_{i}\in\Phi$ such that $\|\varphi_{i}-\phi_{i}\|<\frac{\epsilon}{4N|\alpha_{i}|\|a_{i}\|}$ and  there exist $x_{i}\in D$ such that $\|a_{i}-x_{i}\|<\frac{\epsilon}{4N|\alpha_{i}|\|\phi_{i}\|}$. Therefore
\begin{eqnarray*}
\|F-\sum_{i=1}^{N}\alpha_{i}x_{i}\otimes \phi_{i}\|_{J}&=& \|\sum_{i=1}^{N}\alpha_{i}a_{i}\otimes \varphi_{i}-\sum_{i=1}^{N}\alpha_{i}x_{i}\otimes \phi_{i}\|_{J}\\
&=& \|\sum_{i=1}^{N}\alpha_{i}(a_{i}\otimes \varphi_{i}-x_{i}\otimes \phi_{i})\|_{J} \\
   &\leq&  \sum_{i=1}^{N}|\alpha_{i}| \|a_{i}\otimes \varphi_{i}-x_{i}\otimes \phi_{i}\|_{J}\\
   &=& \sum_{i=1}^{N}|\alpha_{i}| \|a_{i}\otimes \varphi_{i}-a_{i}\otimes \phi_{i}+a_{i}\otimes \phi_{i}-x_{i}\otimes \phi_{i}\|_{J}\\
   &=& \sum_{i=1}^{N}|\alpha_{i}| \|a_{i}\otimes( \varphi_{i}- \phi_{i})+(a_{i}-x_{i})\otimes \phi_{i}\|_{J} \\
   &\leq& \sum_{i=1}^{N}|\alpha_{i}| (\|a_{i}\otimes( \varphi_{i}- \phi_{i})\|_{J}+\|(a_{i}-x_{i})\otimes \phi_{i}\|_{J}) \\
  &=& \sum_{i=1}^{N}|\alpha_{i}| (\|a_{i}\| \| \varphi_{i}- \phi_{i}\|+\|a_{i}-x_{i}\|\| \phi_{i}\|) \\
   &<&\frac{\epsilon}{4}+\frac{\epsilon}{4}\\
   &=&\frac{\epsilon}{2}.
\end{eqnarray*}
Hence \begin{eqnarray*}
        \|T-\sum_{i=1}^{N}\alpha_{i}x_{i}\otimes \phi_{i}\|_{J} &=& \|T-F+F-\sum_{i=1}^{N}\alpha_{i}x_{i}\otimes \phi_{i}\|_{J} \\
         &\leq& \|T-F\|_{J}+\|F-\sum_{i=1}^{N}\alpha_{i}x_{i}\otimes \phi_{i}\|_{J} \\
         &<&  \frac{\epsilon}{2}+\frac{\epsilon}{2}\\
         &=& \epsilon.
      \end{eqnarray*}
Thus $\mathcal{X}$ is a countable dense subset of $J$ with respect to $\|.\|_{J}$-topology.
\end{proof}
In the setting of Banach ideals, J. Bonet et al \cite{Jose bonet} characterised the hypercyclicity of the left and the right multipliers using tensor techniques developed in \cite{A. Peris}. For a separable admissible Banach ideal $J$ of $\mathcal{B}(X)$, they showed that
\begin{enumerate}
  \item[$(i)$] $T$ satisfies the hypercyclicity criterion on $X$ if and only if  $L_{T}$ is hypercyclic on $(J, \|.\|_{J})$.
  \item[$(ii)$] $T^{\ast}$ satisfies the hypercyclicity criterion on $X^{\ast}$ if and only if $R_{T}$ is hypercyclic on $(J, \|.\|_{J})$.
  \end{enumerate}
In the following, we prove that this results hold for supercyclicity.
\begin{theorem}\label{theorem1}
Let $X$ be  Banach space, with $\dim X>1$ such that $X^{\ast}$, its dual, is separable and $T\in \mathcal{B}(X)$. Then  $T$ satisfy the supercyclicity criterion on $X$ if and only $L_{T}$ is supercyclic on $(J, \|.\|_{J})$.
\end{theorem}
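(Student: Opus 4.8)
The plan is to prove the two implications by different routes: the forward implication by writing down the Supercyclicity Criterion for $L_T$ by hand, and the converse by pushing a supercyclic vector of $L_T$ forward onto $T\oplus T$ through a surjective intertwiner and then invoking the known characterization of the Criterion.

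For the direction ``$T$ satisfies the Criterion $\Rightarrow$ $L_T$ supercyclic'', suppose the Criterion holds for $T$ with dense sets $D_1,D_2\subseteq X$, an increasing sequence $(n_k)$, and maps $S_{n_k}\colon D_2\to X$. I would fix a countable dense set $\Phi\subseteq X^\ast$ and set $\mathcal D_1=\operatorname{span}\{x\otimes\phi: x\in D_1,\ \phi\in\Phi\}$ and $\mathcal D_2=\operatorname{span}\{y\otimes\phi: y\in D_2,\ \phi\in\Phi\}$; these are dense in $(J,\|\cdot\|_J)$ by the computation in Lemma \ref{lemma1}, which uses only density of the factors and not their countability. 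The mechanism is the identity $L_T^{\,n}(x\otimes\phi)=(T^nx)\otimes\phi$, so that axiom $(iv)$ gives $\|L_T^{\,n}(x\otimes\phi)\|_J=\|T^nx\|\,\|\phi\|$. Defining $\widetilde S_{n_k}$ on $\mathcal D_2$ by $\widetilde S_{n_k}(y\otimes\phi)=(S_{n_k}y)\otimes\phi$ on a fixed representation of each element (the $\widetilde S_{n_k}$ need not be linear, so well-definedness is not an issue), condition $(ii)$ reduces to $(T^{n_k}S_{n_k}y)\otimes\phi\to y\otimes\phi$, and condition $(i)$ follows by expanding the product of two finite sums and using $\|\psi\|\,\|\phi\|\,\|T^{n_k}x\|\,\|S_{n_k}y\|\to0$ termwise. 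As $J$ is separable by Lemma \ref{lemma1}, the Criterion then forces $L_T$ to be supercyclic.

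For the converse I would exploit the hypothesis $\dim X>1$. Pick linearly independent $z_1,z_2\in X$ and, by Hahn--Banach, functionals $\psi_1,\psi_2\in X^\ast$ with $\psi_i(z_j)=\delta_{ij}$, and define $\Lambda\colon J\to X\oplus X$ by $\Lambda(A)=(Az_1,Az_2)$. This $\Lambda$ is bounded since $\|Az_i\|\le\|z_i\|\,\|A\|_J$; it is onto, because $\Lambda(u\otimes\psi_1+v\otimes\psi_2)=(u,v)$ with $u\otimes\psi_1+v\otimes\psi_2\in\mathcal F(X)\subseteq J$; and it intertwines $L_T$ with $T\oplus T$, since $\Lambda(L_TA)=(T(Az_1),T(Az_2))=(T\oplus T)\Lambda(A)$. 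By the open mapping theorem $\Lambda$ is open, hence maps dense subsets of $J$ onto dense subsets of $X\oplus X$; applying it to the dense projective orbit $\{\lambda T^nA_0:\lambda\in\mathbb C,\ n\ge0\}$ of a supercyclic vector $A_0$ of $L_T$ shows that $(A_0z_1,A_0z_2)$ is supercyclic for $T\oplus T$.

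The last step, and the place where the real difficulty sits, is passing from supercyclicity of $T\oplus T$ back to the Supercyclicity Criterion for $T$; here I would invoke the supercyclic analogue of the B\`es--Peris theorem, that $T$ satisfies the Supercyclicity Criterion if and only if $T\oplus T$ is supercyclic. This is essential because supercyclicity of $T$ by itself never returns the Criterion, so the argument must genuinely use the extra freedom in the dual variable $\phi$ that lives inside $J$ --- it is precisely this freedom that the surjection $\Lambda$ converts into a second, independent coordinate and hence into the multiplicity needed for $T\oplus T$. The only other points requiring care are the bookkeeping of the single scalar $\lambda$ across both coordinates of $T\oplus T$ and the elementary verification that the image of a projective orbit under a linear open map is again a dense projective orbit.
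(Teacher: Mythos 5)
Your proposal is correct and follows essentially the same route as the paper: the forward direction builds the Supercyclicity Criterion for $L_T$ on the dense spans $\operatorname{span}\{x\otimes\phi\}$ using Lemma \ref{lemma1} and the right inverses $y\otimes\phi\mapsto (S_{n_k}y)\otimes\phi$, and the converse uses the surjective intertwiner $A\mapsto (Az_1,Az_2)$ to transfer supercyclicity to $T\oplus T$ and then invokes the Berm\'udez--Bonilla--Peris characterization (the paper's \cite[Lemma 3.1]{T. Bermudez}). The only cosmetic difference is that you justify the transfer of supercyclicity via the open mapping theorem, where continuity plus dense range of the intertwiner already suffices.
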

\begin{proof}
$\Rightarrow)$ Assume that $T$ satisfy the supercyclicity criterion on $X$, then there exist a strictly increasing sequence $(n_{k})_{k}$ of positive integers, dense subsets $D_{1}, D_{2}$ of $X$ and maps $S_{n_{k}} : D_2\longrightarrow X$ such that for  all $x\in D_1$ and $y\in D_2$
      \begin{enumerate}
  \item[a)] $\|T^{n_{k}}x\|\|S_{n_{k}}y\|\longrightarrow 0$, as $k\rightarrow +\infty$;
  \item[b)] $T^{n_{k}}S_{n_{k}}y\longrightarrow y$, as $k\rightarrow +\infty$.
\end{enumerate}
Let $\Phi$ be dense subset of $X^{\ast}$ and consider the sets $X_{0}=span\{x\otimes \varphi / x\in D_{1}, \varphi\in\Phi\}$ and $Y_{0}=span\{y\otimes \phi / y\in D_{2}, \phi\in\Phi\}$ and the maps $Q_{n_{k}} : Y_{0}\longrightarrow J$ define by $$Q_{n_{k}}(\sum_{j=1}^{N}\beta_{j}y_{j}\otimes \phi_{j})=\sum_{j=1}^{N}\beta_{j}(S_{n_{k}}y_{j}\otimes \phi_{j}).$$
By  Lemma \ref{lemma1},  $X_{0}$ and $Y_{0}$ are subsets of $J$ which are $\|.\|_{J}$-dense  in $J$. Let $\displaystyle A=\sum_{i=1}^{N_{1}}\alpha_{j}x_{i}\otimes \varphi_{i}\in X_{0}$ and $\displaystyle B=\sum_{j=1}^{N_{2}}\beta_{j}y_{j}\otimes \phi_{j}\in Y_{0}$, then
\begin{eqnarray*}
  \|(L_{T})^{n_k}A\|_{J}\|Q_{n_{k}}B\|_{J} &=& \|(L_{T})^{n_k}(\sum_{i=1}^{N_{1}}\alpha_{i}x_{i}\otimes \varphi_{i})\|_{J}\|Q_{n_{k}}(\sum_{j=1}^{N_2}\beta_{j}y_{j}\otimes \phi_{j})\|_{J} \\
  &=& \|\sum_{i=1}^{N_{1}}\alpha_{i}T^{n_k}x_{i}\otimes \varphi_{i}\|_{J}\|\sum_{j=1}^{N_2}\beta_{j}(S_{n_{k}}y_{j}\otimes \phi_{j})\|_{J} \\
   &\leq& (\sum_{i=1}^{N_{1}}|\alpha_{i}|  \|T^{n_k}x_{i}\otimes \varphi_{i}\|_{J})(\sum_{j=1}^{N_2}|\beta_{j}| \|S_{n_{k}}y_{j}\otimes \phi_{j}\|_{J}) \\
  &=&  (\sum_{i=1}^{N_{1}}|\alpha_{i}|  \|T^{n_k}x_{i}\| \|\varphi_{i}\|)(\sum_{j=1}^{N_2}|\beta_{j}| \|S_{n_{k}}y_{j}\| \|\phi_{j}\|)\\
    &=&\sum_{i\leq N_{1}; j\leq N_{2}}|\alpha_{i}| |\beta_{j}| \|T^{n_k}x_{i}\|  \|S_{n_{k}}y_{j}\| \|\varphi_{i}\|\|\phi_{j}\|.
\end{eqnarray*}
Using the assumption $a)$. We show that $\|(L_{T})^{n_k}A\|_{J}\|Q_{n_{k}}B\|_{J}\longrightarrow0$,  as $k\longrightarrow +\infty$. In the other hand we have :
\begin{eqnarray*}
  L_{T}^{n_k}Q_{n_{k}}(B) &=& L_{T}^{n_k}Q_{n_{k}}(\sum_{j=1}^{N_{2}}\beta_{j}y_{j}\otimes \phi_{j}) \\
   &=&  L_{T}^{n_k}(\sum_{j=1}^{N_{2}}\beta_{j}S_{n_k}y_{j}\otimes \phi_{j})\\
  &=&\sum_{j=1}^{N_{2}}\beta_{j}T^{n_k}S_{n_k}y_{j}\otimes \phi_{j} \\
   &\longrightarrow& \sum_{j=1}^{N_{2}}\beta_{j}y_{j}\otimes \phi_{j}=B  \ \ as \ k\longrightarrow +\infty,
\end{eqnarray*}
by using the assumption $b)$. Hence $L_{T}$ satisfies the supercyclicity criterion on $(J, \|.\|_{J})$. Thus $L_{T}$ is supercyclic on $(J, \|.\|_{J})$.\\
$\Leftarrow)$ Suppose that $L_{T}$ is supercyclic  on $(J, \|.\|_{J})$. Assume that $x_1, x_2\in X$ are linearly independent  and define $$\begin{array}{ccccc}
\varphi & : &  J & \longrightarrow & X\bigoplus X\\
 & & R & \longmapsto & Rx_1 \oplus Rx_2.  \\
\end{array}$$
Then $\varphi$ is surjective. Indeed, let $y_1, y_2\in X$. By using the Hahn-Banach theorem, there exist  $x_{1}^{\ast}, x_{2}^{\ast}\in X^{\ast}$ such that $x_{1}^{\ast}(x_{1})=x_{2}^{\ast}(x_{2})=1$ and , $x_{1}^{\ast}(x_{2})=x_{2}^{\ast}(x_{1})=0$. Let  $R=y_{1}\otimes x_{1}^{\ast}+y_{2}\otimes x_{2}^{\ast}\in J$ thus  $\varphi(R)=Rx_1 \oplus Rx_2=y_{1}\oplus y_{2}$. For  $A\in J$, we have \begin{eqnarray*}
                                                                    (\varphi\circ L_{T})A &=&  \varphi(TA)  \\
                                                                    &=& (TA)x_1\oplus (TA)x_2 \\
                                                                     &=& T(Ax_1)\oplus T(Ax_2) \\
                                                                     &=& (T\oplus T)(Ax_1\oplus Ax_2)) \\
                                                                     &=&  (T\oplus T)\circ\varphi(A)\\
                                                                     &=&  ((T\oplus T)\circ\varphi)A.
                                                                  \end{eqnarray*}

 Therefore, $\varphi\circ L_{T}=(T\oplus T)\circ\varphi$. Thus $T\oplus T$ is supercyclic on $X\bigoplus X$. Hence, by \cite[Lemma 3.1]{T. Bermudez} $T$ satisfies the supercyclicity criterion.
\end{proof}
We have the following corollary.
\begin{corollary}
Let $X$ be  Banach space, with $\dim X>1$ such that $X^{\ast}$, its dual, is separable  and $T\in \mathcal{B}(X)$. Then the following are equivalent :
 \begin{enumerate}
   \item[$(i)$]$T$ satisfies the supercyclicity criterion on $X$.
   \item[$(ii)$] $L_{T}$ is supercyclic on $(\mathcal{K}(X), \|.\|)$ endowed with the norm operator topology.
   \item[$(iii)$] $L_{T}$ is supercyclic on $\mathcal{B}(X)$ in the strong operator topology.
 \end{enumerate}
\end{corollary}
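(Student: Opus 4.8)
The plan is to establish the cycle of implications $(i)\Rightarrow(ii)\Rightarrow(iii)\Rightarrow(i)$, leaning on Theorem \ref{theorem1} for the equivalence of $(i)$ and $(ii)$ and on a careful comparison of the operator-norm and strong operator topologies for the passage to $(iii)$. First I would observe that $(\mathcal{K}(X),\|\cdot\|)$ is itself a separable admissible Banach ideal of $\mathcal{B}(X)$: axioms $(i)$--$(iv)$ of a Banach ideal are routine for the operator norm (in particular $\|x\otimes x^{\ast}\|=\|x\|\|x^{\ast}\|$ and $\|ASB\|\le\|A\|\|S\|\|B\|$), the finite rank operators $\mathcal{F}(X)$ are dense in $\mathcal{K}(X)$, and separability of $\mathcal{K}(X)$ follows from the separability of $X$ and $X^{\ast}$. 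With this in hand, $(i)\Leftrightarrow(ii)$ is exactly Theorem \ref{theorem1} applied to $J=\mathcal{K}(X)$.

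For $(ii)\Rightarrow(iii)$ I would argue topologically. Since $T\mathcal{K}(X)\subseteq\mathcal{K}(X)$, the operator $L_{T}$ leaves $\mathcal{K}(X)$ invariant and $\mathbb{C}Orb(L_{T},A)\subseteq\mathcal{K}(X)$ for any $A\in\mathcal{K}(X)$. If $A$ is a supercyclic vector for $L_{T}$ on $(\mathcal{K}(X),\|\cdot\|)$, then its orbit $\mathbb{C}Orb(L_{T},A)$ is norm-dense in $\mathcal{K}(X)$; as the strong operator topology is coarser than the norm topology, the norm-closure is contained in the SOT-closure, so the SOT-closure of this orbit contains $\mathcal{K}(X)$. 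It then suffices to note that $\mathcal{F}(X)\subseteq\mathcal{K}(X)$ is SOT-dense in $\mathcal{B}(X)$: given $B\in\mathcal{B}(X)$ and finitely many $x_{1},\dots,x_{n}$, a bounded projection $P$ onto $\mathrm{span}\{x_{1},\dots,x_{n}\}$ (finite-dimensional subspaces being complemented) yields a finite rank operator $BP$ that agrees with $B$ on each $x_{i}$. Hence the SOT-closure of the orbit, being SOT-closed and containing $\mathcal{K}(X)$, equals all of $\mathcal{B}(X)$, so $A$ is supercyclic for $L_{T}$ on $\mathcal{B}(X)$ in the strong operator topology.

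For $(iii)\Rightarrow(i)$ I would reuse the intertwining device from the converse part of Theorem \ref{theorem1}, now verified for the strong topology. Since $\dim X>1$, choose linearly independent $x_{1},x_{2}\in X$ and define $\varphi(R)=Rx_{1}\oplus Rx_{2}$ from $\mathcal{B}(X)$ to $X\oplus X$. This $\varphi$ is continuous from $(\mathcal{B}(X),\mathrm{SOT})$ to $X\oplus X$ by the very definition of the strong topology, it is surjective via the Hahn--Banach construction $R=y_{1}\otimes x_{1}^{\ast}+y_{2}\otimes x_{2}^{\ast}$ used in Theorem \ref{theorem1}, and it satisfies $\varphi\circ L_{T}=(T\oplus T)\circ\varphi$. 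A continuous surjective intertwiner carries an SOT-supercyclic vector of $L_{T}$ to a supercyclic vector of $T\oplus T$, because $X\oplus X=\varphi(\mathcal{B}(X))=\varphi\!\left(\overline{\mathbb{C}Orb(L_{T},A)}^{\mathrm{SOT}}\right)\subseteq\overline{\mathbb{C}Orb(T\oplus T,\varphi(A))}$. Thus $T\oplus T$ is supercyclic on $X\oplus X$, and by \cite[Lemma 3.1]{T. Bermudez} this is equivalent to $T$ satisfying the supercyclicity criterion, which is $(i)$; this closes the cycle.

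The step that deserves the most care is the verification that $\mathcal{K}(X)$ genuinely falls under the hypotheses of Theorem \ref{theorem1}, namely that $\mathcal{F}(X)$ is norm-dense in $\mathcal{K}(X)$; this is where an approximation-type property of $X$ is being used, and should the density fail one would replace $\mathcal{K}(X)$ by $\overline{\mathcal{F}(X)}^{\|\cdot\|}$. Beyond that, the only genuine subtlety lies in the two topology switches: making precise that a norm-dense orbit in $\mathcal{K}(X)$ is SOT-dense in $\mathcal{B}(X)$, and that the intertwiner $\varphi$ remains continuous and surjective with respect to the strong operator topology. Both reduce to elementary facts (SOT-density of finite rank operators and the definition of the strong topology), so no essentially new difficulty arises beyond Theorem \ref{theorem1} itself.
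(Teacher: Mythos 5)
Your proposal is correct, and it reaches $(iii)$ by a route that differs from the paper's in one place worth noting. For $(i)\Leftrightarrow(ii)$ and for $(iii)\Rightarrow(i)$ you do exactly what the paper does: apply Theorem \ref{theorem1} with $J=\mathcal{K}(X)$, and transfer SOT-supercyclicity of $L_{T}$ down to supercyclicity of $T\oplus T$ via the SOT-continuous surjective intertwiner $\varphi(R)=Rx_{1}\oplus Rx_{2}$, then invoke \cite[Lemma 3.1]{T. Bermudez}. Where you diverge is the remaining implication: the paper proves $(i)\Rightarrow(iii)$ using the open-sets characterization of supercyclicity, picking compact operators $A_{1}\in U$, $A_{2}\in V$ by SOT-density of $\mathcal{K}(X)$ in $\mathcal{B}(X)$ and pulling the problem back to norm-open sets of $\mathcal{K}(X)$; you instead prove $(ii)\Rightarrow(iii)$ directly by a topology-comparison argument, showing that the very same supercyclic vector works because the SOT-closure of a norm-dense orbit in $\mathcal{K}(X)$ contains $\mathcal{K}(X)$ and hence, by SOT-density of $\mathcal{F}(X)$, all of $\mathcal{B}(X)$. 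Your version is arguably cleaner: it yields an explicit SOT-supercyclic vector and avoids the paper's implicit (and not fully justified) reduction of a basic SOT-neighbourhood to one determined by a single vector $x_{i}$. Finally, your caveat about admissibility is a genuine point the paper glosses over: norm-density of $\mathcal{F}(X)$ in $\mathcal{K}(X)$ is an approximation-property assumption on $X$, not a consequence of separability of $X^{\ast}$, so strictly speaking the equivalence as stated needs either that hypothesis or the replacement of $\mathcal{K}(X)$ by $\overline{\mathcal{F}(X)}^{\|\cdot\|}$ that you suggest.
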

\begin{proof}
$(i)\Leftrightarrow(ii)$ Consequence of  Theorem \ref{theorem1}. Since $\mathcal{K}(X)$ is an admissible Banach ideal of $\mathcal{B}(X)$.\\
 $(i)\Rightarrow(iii)$ Suppose that $T$ satisfy the supercyclicity criterion on $X$. Let $U$ and $V$ be two non-empty open subsets of $\mathcal{B}(X)$ in the strong operator topology. Since $\mathcal{K}(X)$ is dense in $\mathcal{B}(X)$ with the strong operator topology \cite[Corollary 3]{Chan2001},  there exist $A_{1}, A_{2}\in\mathcal{K}(X)$ such that $A_{1}\in U$ and $A_{2}\in V$. Thus we can find $x_1, x_2\in X\backslash\{0\}$ and $\epsilon_1, \epsilon_2>0$ such that $$\{A\in\mathcal{B}(X) \ : \ ||(A-A_1)x_1||<\epsilon_1\}\subset U$$
and $$\{A\in\mathcal{B}(X) \ : \ ||(A-A_2)x_2||<\epsilon_2\}\subset U.$$
Let $$U_i =\{A\in\mathcal{K}(X) \ : \ ||A-A_i||<\frac{\epsilon_i}{||x_i||}\}.$$
$U_i$ is a non-empty open subset of $\mathcal{K}(X)$ with the norm operator topology. By  Theorem \ref{theorem1} with $J=\mathcal{K}(X)$, $L_{T}$ is supercyclic on $\mathcal{K}(X)$, so there is some $\alpha\in\mathbb{C}$ and $n\in \mathbb{N}$ such that $$\alpha(L_{T})^{n}U_{1}\cap U_{2}\neq\emptyset.$$
Hence, it follows that $\alpha(L_{T})^{n}U\cap V\neq\emptyset$. Thus, $L_{T}$ is supercyclic on $\mathcal{B}(X)$ in the strong operator topology.\\
$(i)\Rightarrow(iii)$ By the same technique  as in the proof of  Theorem \ref{theorem1}.
\end{proof}
\begin{theorem}\label{theorem2}
Let $X$ be  Banach space, with $\dim X>1$ such that $X^{\ast}$, its dual, is separable and $T\in \mathcal{B}(X)$. Then $T^{\ast}$ satisfies the supercyclicity criterion on $X^{\ast}$ if and only $R_{T}$ is supercyclic on $(J, \|.\|_{J})$.
\end{theorem}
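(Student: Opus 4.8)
The plan is to mirror the proof of Theorem \ref{theorem1}, the only structural change being that the dynamics of $R_T$ is carried by the \emph{functional} factor of a rank-one operator and is governed by $T^{\ast}$ rather than $T$. The key computation is that, for $x\in X$ and $\varphi\in X^{\ast}$, one has $R_T(x\otimes\varphi)=(x\otimes\varphi)T=x\otimes T^{\ast}\varphi$, and hence $R_T^{n}(x\otimes\varphi)=x\otimes (T^{\ast})^{n}\varphi$. This identity is what replaces the relation $L_T^{n}(x\otimes\varphi)=T^{n}x\otimes\varphi$ used in Theorem \ref{theorem1}.

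For the forward implication, suppose $T^{\ast}$ satisfies the supercyclicity criterion on $X^{\ast}$: there are dense subsets $D_1,D_2\subset X^{\ast}$, a strictly increasing sequence $(n_k)_k$, and maps $S_{n_k}:D_2\to X^{\ast}$ with $\|(T^{\ast})^{n_k}\psi\|\,\|S_{n_k}\chi\|\to 0$ and $(T^{\ast})^{n_k}S_{n_k}\chi\to\chi$ for all $\psi\in D_1$, $\chi\in D_2$. Fixing a countable dense set $D\subset X$, I would put $X_0=\mathrm{span}\{x\otimes\psi : x\in D,\ \psi\in D_1\}$ and $Y_0=\mathrm{span}\{x\otimes\chi : x\in D,\ \chi\in D_2\}$, which are $\|.\|_{J}$-dense in $J$ by Lemma \ref{lemma1}, and define $Q_{n_k}:Y_0\to J$ on elementary tensors by $Q_{n_k}(x\otimes\chi)=x\otimes S_{n_k}\chi$. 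Using the identity above in place of the one in Theorem \ref{theorem1}, the product $\|R_T^{n_k}A\|_{J}\,\|Q_{n_k}B\|_{J}$ is bounded by $\sum_{i,j}|\alpha_i||\beta_j|\,\|x_i\|\,\|x_j\|\,\|(T^{\ast})^{n_k}\psi_i\|\,\|S_{n_k}\chi_j\|$ for $A\in X_0$, $B\in Y_0$, which tends to $0$ by the first condition, while $R_T^{n_k}Q_{n_k}B=\sum_j\beta_j\,x_j\otimes (T^{\ast})^{n_k}S_{n_k}\chi_j\to B$ by the second. Thus $R_T$ satisfies the supercyclicity criterion and is supercyclic.

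For the converse, assume $R_T$ is supercyclic on $(J,\|.\|_{J})$. Since $\dim X>1$ we may pick two linearly independent functionals $x_1^{\ast},x_2^{\ast}\in X^{\ast}$ and define the continuous linear map $\psi:J\to X^{\ast}\oplus X^{\ast}$ by $\psi(A)=A^{\ast}x_1^{\ast}\oplus A^{\ast}x_2^{\ast}$. Because $(AT)^{\ast}=T^{\ast}A^{\ast}$, a direct computation gives $\psi(R_T A)=T^{\ast}A^{\ast}x_1^{\ast}\oplus T^{\ast}A^{\ast}x_2^{\ast}=(T^{\ast}\oplus T^{\ast})\psi(A)$, so $\psi$ intertwines $R_T$ with $T^{\ast}\oplus T^{\ast}$. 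I would then check that $\psi$ is surjective: as $x_1^{\ast},x_2^{\ast}$ are linearly independent, the map $z\mapsto(x_1^{\ast}(z),x_2^{\ast}(z))$ is onto $\mathbb{C}^2$, so there exist $z_1,z_2\in X$ with $x_i^{\ast}(z_j)=\delta_{ij}$; given $y_1^{\ast},y_2^{\ast}\in X^{\ast}$, the operator $A=z_1\otimes y_1^{\ast}+z_2\otimes y_2^{\ast}\in J$ satisfies $A^{\ast}x_i^{\ast}=x_i^{\ast}(z_1)y_1^{\ast}+x_i^{\ast}(z_2)y_2^{\ast}=y_i^{\ast}$, so $\psi(A)=y_1^{\ast}\oplus y_2^{\ast}$. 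A surjective continuous intertwiner transfers supercyclicity, hence $T^{\ast}\oplus T^{\ast}$ is supercyclic on $X^{\ast}\oplus X^{\ast}$, and \cite[Lemma 3.1]{T. Bermudez} yields that $T^{\ast}$ satisfies the supercyclicity criterion on $X^{\ast}$.

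The forward estimate is routine, being formally identical to Theorem \ref{theorem1} once the correct tensor identity is in hand. The step demanding the most care is the converse: one must arrange the intertwining so that the induced dynamics is $T^{\ast}\oplus T^{\ast}$ and not $T\oplus T$, which forces the use of the adjoint $A^{\ast}$ (rather than evaluation $Ax_i$ as in Theorem \ref{theorem1}), together with the verification of surjectivity of $\psi$ through the biorthogonal vectors $z_1,z_2$.
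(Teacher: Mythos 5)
Your proposal is correct and follows essentially the same route as the paper: the same tensor identity $R_T^{n}(x\otimes\varphi)=x\otimes(T^{\ast})^{n}\varphi$ and dense spans from Lemma \ref{lemma1} for the forward direction, and the same intertwining map $A\mapsto A^{\ast}x_1^{\ast}\oplus A^{\ast}x_2^{\ast}$ together with \cite[Lemma 3.1]{T. Bermudez} for the converse. Your surjectivity check is in fact slightly cleaner than the paper's, since you explicitly produce biorthogonal vectors $z_1,z_2\in X$ for the fixed independent functionals $x_1^{\ast},x_2^{\ast}$, whereas the paper's wording at that point conflates the roles of the $x_i$ and the $x_i^{\ast}$.
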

\begin{proof}
$\Rightarrow)$ Assume that $T^{\ast}$ satisfy the supercyclicity criterion on $X^{\ast}$, then there exist a strictly increasing sequence $(n_{k})_{k}$ of positive integers, dense subsets $\Phi_{1}, \Phi_{2}$ of $X^{\ast}$ and maps $M_{n_{k}} : \Phi_2\longrightarrow X^{\ast}$ such that for  all $\varphi\in \Phi_1$ and $\phi\in \Phi_2$
      \begin{enumerate}
  \item[a)] $\|(T^{\ast})^{n_{k}}\varphi\|\|M_{n_{k}}\phi\|\longrightarrow 0$, as $k\rightarrow +\infty$;
  \item[b)] $(T^{\ast})^{n_{k}}M_{n_{k}}\phi\longrightarrow \phi$, as $k\rightarrow +\infty$.
\end{enumerate}
Let $D$ be dense subset of $X$ and consider the sets $\Phi_{0}=span\{x\otimes \varphi / x\in D, \varphi\in\Phi_1\}$ and $\Psi_{0}=span\{y\otimes \phi / x\in D, \phi\in\Phi_2\}$ and the maps $N_{n_{k}} : \Psi_{0}\longrightarrow J$ define by $$N_{n_{k}}(\sum_{j=1}^{N}\beta_{j}y_{j}\otimes \phi_{j})=\sum_{j=1}^{N}\beta_{j}y_{j}\otimes M_{n_{k}}\phi_{j}.$$
By  Lemma \ref{lemma1},  $\Phi_{0}$ and $\Psi_{0}$ are subsets of $J$ which are $\|.\|_{J}$-dense  in $J$. Let  $\displaystyle A=\sum_{i=1}^{N_{1}}\alpha_{j}x_{i}\otimes \varphi_{i}\in \Phi_{0}$ and $\displaystyle  B=\sum_{j=1}^{N_{2}}\beta_{j}y_{j}\otimes \phi_{j}\in \Psi_{0}$, then
\begin{eqnarray*}
  \|(R_{T})^{n_k}A\|_{J}\|N_{n_{k}}B\|_{J} &=& \|(R_{T})^{n_k}(\sum_{i=1}^{N_{1}}\alpha_{i}x_{i}\otimes \varphi_{i})\|_{J}\|N_{n_{k}}(\sum_{j=1}^{N_2}\beta_{j}y_{j}\otimes \phi_{j})\|_{J} \\
  &=& \|\sum_{i=1}^{N_{1}}\alpha_{i}x_{i}\otimes (T^{\ast})^{n_k}\varphi_{i}\|_{J}\|\sum_{j=1}^{N_2}\beta_{j}y_{j}\otimes M_{n_{k}}\phi_{j})\|_{J} \\
   &\leq& (\sum_{i=1}^{N_{1}}|\alpha_{i}|  \|x_{i}\otimes (T^{\ast})^{n_k}\varphi_{i}\|_{J})(\sum_{j=1}^{N_2}|\beta_{j}| \|y_{j}\otimes M_{n_{k}}\phi_{j}\|_{J}) \\
  &=&  (\sum_{i=1}^{N_{1}}|\alpha_{i}|  \|x_{i}\| \|(T^{\ast})^{n_k}\varphi_{i}\|)(\sum_{j=1}^{N_2}|\beta_{j}| \|y_{j}\| \|M_{n_{k}}\phi_{j}\|)\\
    &=&\sum_{i\leq N_{1}; j\leq N_{2}}|\alpha_{j}| |\beta_{j}|  \|(T^{\ast})^{n_k}\varphi_{i}\|  \|M_{n_{k}}\phi_{j}\| \|x_{i}\|\|y_{j}\|.
\end{eqnarray*}
Using the assumption $a)$. We show that $\|(R_{T})^{n_k}A\|_{J}\|N_{n_{k}}B\|_{J}\longrightarrow0$,  as $k\longrightarrow +\infty$. In the other hand we have
\begin{eqnarray*}
  (R_{T})^{n_k}N_{n_{k}}(B) &=& (R_{T})^{n_k}N_{n_{k}}(\sum_{j=1}^{N_{2}}\beta_{j}y_{j}\otimes \phi_{j}) \\
   &=&  (R_{T})^{n_k}(\sum_{j=1}^{N_{2}}\beta_{j}y_{j}\otimes M_{n_k}\phi_{j})\\
  &=&\sum_{j=1}^{N_{2}}\beta_{j}y_{j}\otimes (T^{\ast})^{n_k}M_{n_k}\phi_{j} \\
   &\longrightarrow& \sum_{j=1}^{N_{2}}\beta_{j}y_{j}\otimes \phi_{j}=B  \ \ as \ k\longrightarrow +\infty,
\end{eqnarray*}
by using the assumption $b)$. Hence $R_{T}$ satisfies the supercyclicity criterion on $(J, \|.\|_{J})$. Thus $R_{T}$ is supercyclic on $(J, \|.\|_{J})$.\\
$\Leftarrow)$ Suppose that $R_{T}$ is supercyclic on on $(J, \|.\|_{J})$. Let  $x_{1}^{\ast}, x_{2}^{\ast}\in X^{\ast}$ are linearly independent  and define $$\begin{array}{ccccc}
\phi & : &  J & \longrightarrow & X^{\ast}\bigoplus X^{\ast}\\
 & & R & \longmapsto & R^{\ast}x^{\ast}_{1} \oplus R^{\ast}x^{\ast}_{2}.  \\
\end{array}$$
Then $\phi$ is surjective, indeed, let $y_{1}^{\ast}, y_{2}^{\ast}\in X^{\ast}$, we take $x_{1}^{\ast}, x_{2}^{\ast}\in X^{\ast}$ such that $x_{i}^{\ast}(x_{j})=\delta_{i, j}$ and put $R=x_{1}\otimes y_{1}^{\ast}+x_{2}\otimes y_{2}^{\ast}$, then $\phi(R)=(R^{\ast}x_{1}^{\ast}, R^{\ast}x_{2}^{\ast})=(x_{1}^{\ast}\circ R, x_{2}^{\ast}\circ R)=(y_{1}^{\ast}, y_{2}^{\ast})$.
For  $A\in J$, we have \begin{eqnarray*}
  (\phi\circ R_{T})A &=&  \phi(AT)  \\
  &=& (AT)^{\ast}x_{1}^{\ast}\oplus (AT)^{\ast}x_{2}^{\ast} \\
  &=&T^{\ast}A^{\ast}x_{1}^{\ast}\oplus T^{\ast}A^{\ast}x_{2}^{\ast} \\
  &=& (T^{\ast}\oplus T^{\ast})(A^{\ast}x_{1}^{\ast}\oplus A^{\ast}x_{2}^{\ast})) \\
  &=&  ((T^{\ast}\oplus T^{\ast})\circ\phi)A
  \end{eqnarray*}

 Therefore, $\phi\circ R_{T}=(T^{\ast}\oplus T^{\ast})\circ\phi$. Thus $T^{\ast}\oplus T^{\ast}$ is supercyclic on $X^{\ast}\bigoplus X^{\ast}$. Hence, by \cite[Lemma 3.1]{T. Bermudez} $T^{\ast}$ satisfies the supercyclicity criterion on $X^{\ast}$.
\end{proof}
We have the following corollary.
\begin{corollary}
Let $X$ be  Banach space, with $\dim X>1$ such that $X^{\ast}$, its dual, is separable and $T\in \mathcal{B}(X)$. Then the following are equivalent :
 \begin{enumerate}
   \item[$(i)$]$T^{\ast}$ satisfies the supercyclicity criterion on $X^{\ast}$.
   \item[$(ii)$] $R_{T}$ is supercyclic on $(\mathcal{K}(X),\|.\|)$  endowed with the norm operator topology.
   \item[$(iii)$]$R_{T}$ is supercyclic on $\mathcal{B}(X)$ in the strong operator topology.
    \end{enumerate}
\end{corollary}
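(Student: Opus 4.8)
The plan is to establish the three equivalences by closing the cycle $(i)\Leftrightarrow(ii)$, $(i)\Rightarrow(iii)$ and $(iii)\Rightarrow(i)$, mirroring exactly the scheme used for the corollary on $L_{T}$, but invoking Theorem \ref{theorem2} in place of Theorem \ref{theorem1}.

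For $(i)\Leftrightarrow(ii)$ I would simply apply Theorem \ref{theorem2} with the particular ideal $J=\mathcal{K}(X)$, after recording that $(\mathcal{K}(X),\|\cdot\|)$ is a separable admissible Banach ideal of $\mathcal{B}(X)$: its norm is the operator norm, and the finite rank operators are $\|\cdot\|$-dense in it. Since $X^{\ast}$ is separable (hence $X$ is), Lemma \ref{lemma1} supplies a countable dense subset of the ideal, so all hypotheses of Theorem \ref{theorem2} are met and the equivalence is immediate.

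For $(i)\Rightarrow(iii)$ I would argue through the Hilden open-set characterization of supercyclicity recalled in the introduction. Let $U,V$ be nonempty SOT-open subsets of $\mathcal{B}(X)$. Because $\mathcal{K}(X)$ is SOT-dense in $\mathcal{B}(X)$ (\cite[Corollary 3]{Chan2001}), I can choose $A_{1}\in U\cap\mathcal{K}(X)$ and $A_{2}\in V\cap\mathcal{K}(X)$ and then shrink to basic SOT-neighbourhoods of the form $\{A:\|(A-A_{i})x_{i}\|<\epsilon_{i}\}$. The elementary estimate $\|(A-A_{i})x_{i}\|\leq\|A-A_{i}\|\,\|x_{i}\|$ shows that the norm ball $U_{i}=\{A\in\mathcal{K}(X):\|A-A_{i}\|<\epsilon_{i}/\|x_{i}\|\}$ lies inside the corresponding SOT-neighbourhood. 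By the already-proved $(i)\Rightarrow(ii)$, $R_{T}$ is supercyclic on $(\mathcal{K}(X),\|\cdot\|)$, so there exist $\alpha\in\mathbb{C}$ and $n\in\mathbb{N}$ with $\alpha(R_{T})^{n}U_{1}\cap U_{2}\neq\emptyset$; since $U_{i}\subset U,V$ this yields $\alpha(R_{T})^{n}U\cap V\neq\emptyset$, i.e. $R_{T}$ is supercyclic on $\mathcal{B}(X)$ in the strong operator topology.

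For the closing implication $(iii)\Rightarrow(i)$ I would reproduce the quasi-conjugacy argument from the converse half of Theorem \ref{theorem2}. Fixing linearly independent $x_{1}^{\ast},x_{2}^{\ast}\in X^{\ast}$ and the map $\phi:R\mapsto R^{\ast}x_{1}^{\ast}\oplus R^{\ast}x_{2}^{\ast}$, the same computation produces the intertwining $\phi\circ R_{T}=(T^{\ast}\oplus T^{\ast})\circ\phi$, whence supercyclicity of $R_{T}$ pushes forward to supercyclicity of $T^{\ast}\oplus T^{\ast}$ on $X^{\ast}\oplus X^{\ast}$, and \cite[Lemma 3.1]{T. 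Bermudez} then delivers the supercyclicity criterion for $T^{\ast}$. The one point requiring care — and the step I expect to be the genuine obstacle — is the transfer of supercyclicity across $\phi$ in the SOT setting: since SOT-convergence of $R$ only forces weak$^{\ast}$-convergence of $R^{\ast}x^{\ast}$, the map $\phi$ is a priori only SOT-to-weak$^{\ast}$ continuous, so one must verify that the image of a supercyclic vector stays supercyclic in the norm topology of $X^{\ast}\oplus X^{\ast}$, or else route this implication through $(ii)$, where $\phi$ is genuinely norm continuous. Once this continuity and density bookkeeping is settled, the cycle closes and the corollary follows.
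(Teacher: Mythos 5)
Your proposal follows the paper's proof of this corollary essentially step for step: $(i)\Leftrightarrow(ii)$ is Theorem \ref{theorem2} applied to $J=\mathcal{K}(X)$; $(i)\Rightarrow(iii)$ is the same passage from norm balls in $\mathcal{K}(X)$ to basic SOT-neighbourhoods via $\|(A-A_i)x_i\|\leq\|A-A_i\|\,\|x_i\|$ together with SOT-density of $\mathcal{K}(X)$; and for the closing implication the paper likewise says only ``by the same technique as in the proof of Theorem \ref{theorem2}''. (Two minor points you share with the paper: a basic SOT-neighbourhood is determined by finitely many vectors rather than one, which is harmless, and admissibility of $\mathcal{K}(X)$ --- norm-density of $\mathcal{F}(X)$ in $\mathcal{K}(X)$ --- silently uses the approximation property of $X$.)

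The obstacle you flag in $(iii)\Rightarrow(i)$ is, however, a genuine gap, both in your write-up and in the paper's. The quasi-conjugacy argument needs $\phi(R)=R^{\ast}x_1^{\ast}\oplus R^{\ast}x_2^{\ast}$ to be continuous from the topology in which $R_T$ is assumed supercyclic (the SOT on $\mathcal{B}(X)$) into the norm topology of $X^{\ast}\oplus X^{\ast}$; only then does SOT-density of $\mathbb{C}Orb(R_T,A)$ push forward to norm-density of $\mathbb{C}Orb(T^{\ast}\oplus T^{\ast},\phi(A))$, which is what \cite[Lemma 3.1]{T. Bermudez} requires. But $R\mapsto R^{\ast}x^{\ast}$ is only SOT-to-weak$^{\ast}$ continuous, so at best one obtains weak$^{\ast}$-density of the image orbit, and the lemma cannot be invoked. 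This is precisely where the $R_T$ case differs from the $L_T$ corollary, where $\varphi(R)=Rx_1\oplus Rx_2$ is SOT-to-norm continuous by the very definition of the SOT, so that ``the same technique'' genuinely works there. Your suggested repair --- routing the implication through $(ii)$ --- does not close the cycle either, since it presupposes $(iii)\Rightarrow(ii)$, i.e.\ passing from SOT-supercyclicity on $\mathcal{B}(X)$ to norm-supercyclicity on $\mathcal{K}(X)$, which is established nowhere. So $(iii)\Rightarrow(i)$ requires a new argument (or a proof that the weak$^{\ast}$ conclusion suffices), and neither your proposal nor the paper supplies one; you deserve credit for being the only one of the two to notice.
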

\begin{proof}
$(i)\Leftrightarrow(ii)$ Consequence of  Theorem \ref{theorem2}. Since $\mathcal{K}(X)$ is an admissible Banach ideal of $\mathcal{B}(X)$.\\
 $(i)\Rightarrow(iii)$ Suppose that $T^{\ast}$ satisfies the supercyclicity criterion on $X^{\ast}$. Let $U$ and $V$ be two non-empty open subsets of $\mathcal{B}(X)$ in the strong operator topology. Since $\mathcal{K}(X)$ is dense in $\mathcal{B}(X)$ with the strong operator topology \cite[Corollary 3]{Chan2001}, there exist $A_{1}, A_{2}\in\mathcal{K}(X)$ such that $A_{1}\in U$ and $A_{2}\in V$. Thus we can find $x_1, x_2\in X\backslash\{0\}$ and $\epsilon_1, \epsilon_2>0$ such that $$\{A\in\mathcal{B}(X) \ : \ ||(A-A_1)x_1||<\epsilon_1\}\subset U$$
and $$\{A\in\mathcal{B}(X) \ : \ ||(A-A_2)x_2||<\epsilon_2\}\subset U.$$
Let $$U_i =\{A\in\mathcal{K}(X) \ : \ ||A-A_i||<\frac{\epsilon_i}{||x_i||}\}.$$
$U_i$ is a non-empty open subset of $\mathcal{K}(X)$ with the norm operator topology. By Theorem \ref{theorem2} with $J=\mathcal{K}(X)$, $R_{T}$ is supercyclic on $(\mathcal{K}(X), ||.||)$, so there is some $\alpha\in\mathbb{C}$ and $n\in \mathbb{N}$ such that $$\alpha(R_{T})^{n}U_{1}\cap U_{2}\neq\emptyset.$$
Hence, it follows that $\alpha(R_{T})^{n}U\cap V\neq\emptyset$. Thus $R_{T}$ is supercyclic on $\mathcal{B}(X)$ in the strong operator topology.\\
$(i)\Rightarrow(iii)$ By the same technique  as in the proof of  Theorem \ref{theorem2}.
\end{proof}
\section{Stability of supercyclicity tensor product.}
In \cite{A. Peris} the authors gave a sufficient condition for the tensor product $T\widehat{\otimes}R$ of two operators to be  hypercyclic. We inspired from this results, we give a sufficient condition to the tensor product $T\widehat{\otimes}R$ of two operators to be  supercyclic.
\begin{definition}
Let $X$ be Banach space.  An operator $T\in\mathcal{B}(X)$ is said to be  satisfies the Tensor Sypercyclicity Criterion (TSC) if there exists dense subsets $D_1, D_2\subset X $,  an increasing sequence  $(n_{k})_{k}$  of positive integers, $(\lambda_{n_{k}})_{k\in\mathbb{N}}\subset\mathbb{C}\backslash\{0\}$ and a sequence of mappings  $S_{n_{k}}$ : $D_2\longrightarrow X$ such that :
\begin{enumerate}
  \item[$(i)$] $(\lambda_{n_{k}}T^{n_{k}}x)_{k\in\mathbb{N}}$ is bounded  for all $x\in D_1$;
  \item[$(ii)$] $(\frac{1}{\lambda_{n_{k}}}S_{n_{k}}y)_{k\in\mathbb{N}}$ is bounded for all $y\in D_2$;
  \item[$(iii)$] $(T^{n_{k}} S_{n_{k}}y)_{k\in\mathbb{N}}\longrightarrow y$ for all $y\in D_2$.
\end{enumerate}
\end{definition}
\begin{example}\label{example1}
\begin{enumerate}
  \item Clearly, a sequences of operators satisfying the  Sypercyclicity Criterion satisfy Tensor Sypercyclicity Criterion .
  \item The identity map on $X$ satisfy the Tensor Sypercyclicity Criterion .
  \item Any isometry on a Banach space satisfies the Tensor Sypercyclicity Criterion  with respect to the sequence of all positive integers.
\end{enumerate}
\end{example}
\begin{theorem}\label{theorem3}
Let  $E$ and $F$ be separable  Banach spaces. If $T_{1}\in\mathcal{B}(E)$   satisfies the  Sypercyclicity Criterion and $T_{2}\in\mathcal{B}(F)$   satisfies the Tensor  Sypercyclicity Criterion, then $$T_{1}\widehat{\otimes}_{\pi}T_{2} \ : \ E\widehat{\otimes}_{\pi}F\longrightarrow E\widehat{\otimes}_{\pi}F$$
satisfies the Sypercyclicity Criterion. Accordingly, it is supercyclic.
\end{theorem}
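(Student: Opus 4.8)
The plan is to build, directly from the two hypotheses, a full set of data witnessing the Supercyclicity Criterion for $T_1\widehat{\otimes}_{\pi}T_2$ on $E\widehat{\otimes}_{\pi}F$; supercyclicity then follows as in the discussion of the criterion. Record the Supercyclicity Criterion data for $T_1$: dense sets $D_1,D_2\subset E$, a strictly increasing sequence $(n_k)_k$ and maps $S_{n_k}:D_2\to E$ with $\|T_1^{n_k}x\|\,\|S_{n_k}y\|\to0$ and $T_1^{n_k}S_{n_k}y\to y$. Record the Tensor Supercyclicity Criterion data for $T_2$: dense sets $\widetilde D_1,\widetilde D_2\subset F$, scalars $(\lambda_{n_k})_k\subset\mathbb{C}\setminus\{0\}$ and maps $\widetilde S_{n_k}:\widetilde D_2\to F$ so that $(\lambda_{n_k}T_2^{n_k}u)_k$ and $(\lambda_{n_k}^{-1}\widetilde S_{n_k}v)_k$ are bounded while $T_2^{n_k}\widetilde S_{n_k}v\to v$. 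I would take the candidate dense sets $\mathcal{D}_1=\mathrm{span}\{x\otimes u: x\in D_1,\ u\in\widetilde D_1\}$ and $\mathcal{D}_2=\mathrm{span}\{y\otimes v: y\in D_2,\ v\in\widetilde D_2\}$, and define $\mathcal{S}_{n_k}$ on $\mathcal{D}_2$ by $\mathcal{S}_{n_k}(y\otimes v)=S_{n_k}y\otimes\widetilde S_{n_k}v$, extended by linearity.

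First I would check density: the algebraic tensor product of dense subspaces is $\Pi$-dense in $E\widehat{\otimes}_{\pi}F$, and approximating each factor of an elementary tensor by elements of the dense sets through the estimate $\Pi(x\otimes u-x'\otimes u')\le\|x-x'\|\,\|u\|+\|x'\|\,\|u-u'\|$ gives $\Pi$-density of $\mathcal{D}_1,\mathcal{D}_2$; this is the same computation as in Lemma \ref{lemma1}. For condition $(ii)$ of the criterion, on an elementary tensor one has $(T_1\widehat{\otimes}_{\pi}T_2)^{n_k}\mathcal{S}_{n_k}(y\otimes v)=(T_1^{n_k}S_{n_k}y)\otimes(T_2^{n_k}\widetilde S_{n_k}v)$; since $T_1^{n_k}S_{n_k}y\to y$ and $T_2^{n_k}\widetilde S_{n_k}v\to v$, both factors converge and hence are bounded, so $\Pi\big((T_1^{n_k}S_{n_k}y)\otimes(T_2^{n_k}\widetilde S_{n_k}v)-y\otimes v\big)\to0$ by the same bilinear estimate; linearity extends this to all of $\mathcal{D}_2$.

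The heart of the argument is condition $(i)$, and this is where the asymmetry of the hypotheses is used. Since $\Pi$ is multiplicative on elementary tensors, for $x\otimes u\in\mathcal{D}_1$ and $y\otimes v\in\mathcal{D}_2$ the product of norms factorizes as
\[
\Pi\big((T_1\widehat{\otimes}_{\pi}T_2)^{n_k}(x\otimes u)\big)\,\Pi\big(\mathcal{S}_{n_k}(y\otimes v)\big)=\big(\|T_1^{n_k}x\|\,\|S_{n_k}y\|\big)\big(\|T_2^{n_k}u\|\,\|\widetilde S_{n_k}v\|\big).
\]
The first factor tends to $0$ by the Supercyclicity Criterion for $T_1$. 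For the second factor, the two boundedness conditions give $\|T_2^{n_k}u\|\,\|\widetilde S_{n_k}v\|\le\big(\sup_k\|\lambda_{n_k}T_2^{n_k}u\|\big)\big(\sup_k\|\lambda_{n_k}^{-1}\widetilde S_{n_k}v\|\big)$, the scalars $\lambda_{n_k}$ cancelling, so the second factor stays bounded. A product of a null sequence and a bounded sequence is null, and expanding a pair of finite linear combinations by the triangle inequality reduces the general case to this elementary one, so condition $(i)$ holds. This explains precisely why one operator must satisfy the full criterion (to supply the decay) while the other need only satisfy the tensor version (to supply a compensated bound).

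I expect the main obstacle to be the alignment of the two index sequences: the Supercyclicity Criterion for $T_1$ and the Tensor Supercyclicity Criterion for $T_2$ a priori carry different sequences, yet the construction above needs a single sequence $(n_k)$ along which both sets of conditions hold, since the powers of $T_1$ and $T_2$ must match in $(T_1\widehat{\otimes}_{\pi}T_2)^{n_k}$. Both criteria are preserved under passing to a subsequence, so it suffices to work throughout with one common strictly increasing sequence; I would isolate this reduction explicitly at the start, either as a standing assumption that both criteria are realised along a common sequence (as in the intended applications) or under the convention that they hold along the full sequence of integers. Once the sequence is fixed, the three verifications are routine, and $T_1\widehat{\otimes}_{\pi}T_2$, satisfying the Supercyclicity Criterion, is supercyclic.
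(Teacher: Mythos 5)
Your proposal is correct and follows the same skeleton as the paper's proof: tensor the dense sets, tensor the maps $S_{n_k}^{1}\otimes S_{n_k}^{2}$, verify the three conditions on elementary tensors using multiplicativity of $\Pi$, and extend by linearity. The one substantive difference is in condition $(i)$. The paper introduces scalars $(\lambda^{1}_{n_k})$ for $T_1$ as well, sets $\lambda_{n_k}=\lambda^{1}_{n_k}\lambda^{2}_{n_k}$, and argues that $\|\lambda^{1}_{n_k}T_1^{n_k}x_1\|\to 0$ while $\|\lambda^{2}_{n_k}T_2^{n_k}x_2\|$ stays bounded; this silently invokes an equivalent ``scalar-split'' reformulation of the Supercyclicity Criterion for $T_1$ (that the product condition $\|T_1^{n_k}x\|\,\|S^{1}_{n_k}y\|\to 0$ can be realised as $\lambda^{1}_{n_k}T_1^{n_k}x\to 0$ and $(\lambda^{1}_{n_k})^{-1}S^{1}_{n_k}y\to 0$), which the paper neither states nor proves. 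You work directly with the product form as defined in the introduction and observe that the scalars of the Tensor Supercyclicity Criterion cancel in $\|T_2^{n_k}u\|\,\|\widetilde S_{n_k}v\|=\|\lambda_{n_k}T_2^{n_k}u\|\,\|\lambda_{n_k}^{-1}\widetilde S_{n_k}v\|$, so condition $(i)$ follows from (null)$\times$(bounded) without any reformulation; this is a genuine, if small, tightening. You also rightly flag the common-index-sequence issue, which the paper glosses over by tacitly using the same $(n_k)$ for both operators; your suggestion to make the common sequence a standing hypothesis (or to note that both criteria pass to subsequences of a shared sequence) is the honest way to state what both proofs actually need.
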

\begin{proof}
Let $X_{1}, X_{1}\subset E$, $Y_{1}, Y_{2}\subset F$ be dense subspaces, $(\lambda_{n_{k}}^{1})_{k\in \mathbb{N}}, (\lambda_{n_{k}}^{2})_{k\in \mathbb{N}}\subset\mathbb{C}\backslash\{0\}$ and $S_{n_{k}}^{1} : X^{2} \longrightarrow E$, $S_{n_{k}}^{2}  : Y^{2} \longrightarrow F$, $k\in \mathbb{N}$, linear  maps satisfying the conditions of   Sypercyclicity Criterion and Tensor  Sypercyclicity Criterion for $T_{1}\in\mathcal{B}(E)$ and $T_{2}\in\mathcal{B}(F)$, respectively. We will see that $Z_1:=X_{1}\otimes Y_{1}$,  $Z_2:=X_{2}\otimes Y_{2}$, $(\lambda_{n_{k}}:=\lambda_{n_{k}}^{1}.\lambda_{n_{k}}^{2})_{k\in \mathbb{N}}$ and the maps $S_{n_{k}}:=S_{n_{k}}^{1}\otimes S_{n_{k}}^{2} \ : \ Z_{2} \longrightarrow E\otimes F, $ are such that conditions  of the Sypercyclicity Criterion are satisfied for the operator $$T:=T_{1}\widehat{\otimes}T_{2} \ : E\widehat{\otimes}_{\pi} F\longrightarrow E\widehat{\otimes}_{\pi} F .$$
Indeed, if we compute on elementary tensors, we obtain for every $x_1\in X_1$, $x_2\in Y_1$, $y_{1}\in X_2$ and $y_2\in Y_2$ :
\begin{eqnarray*}
 \displaystyle\lim_{k\rightarrow +\infty}\Pi(\lambda_{n_{k}}T^{n_{k}})(x_{1}\otimes x_{2})) &=& \displaystyle\lim_{k\rightarrow +\infty}\Pi(\lambda_{n_{k}}^{1}.\lambda_{n_{k}}^{2}(T^{n_{k}}_{1}\otimes T^{n_{k}}_{2})(x_{1}\otimes x_{2}))\\
 &=& \displaystyle\lim_{k\rightarrow +\infty}\Pi((\lambda_{n_{k}}^{1}T^{n_{k}}_{1}\otimes \lambda_{n_{k}}^{2}T^{n_{k}}_{2})(x_{1}\otimes x_{2}))\\
  &=&\displaystyle\lim_{k\rightarrow +\infty}\|\lambda_{n_{k}}^{1}T^{n_{k}}_{1}x_{1}\| \|\lambda_{n_{k}}^{2}T^{n_{k}}_{2}x_{2}\|   \\
  &=& 0,
  \end{eqnarray*}
since the first sequence tends to 0 and the second one is bounded. Analogously
\begin{eqnarray*}
 \displaystyle\lim_{k\rightarrow +\infty}\Pi(\frac{1}{\lambda_{n_{k}}}S_{n_{k}})(y_{1}\otimes y_{2})) &=& \displaystyle\lim_{k\rightarrow +\infty}\Pi(\frac{1}{\lambda_{n_{k}}^{1}.\lambda_{n_{k}}^{2}}(S_{n_{k}}^{1}\otimes S_{n_{k}}^{2})(y_{1}\otimes y_{2}))\\
 &=& \displaystyle\lim_{k\rightarrow +\infty}\Pi((\frac{1}{\lambda_{n_{k}}^{1}}S_{n_{k}}^{1}\otimes \frac{1}{\lambda_{k_{k}}^{2}}S_{n_{k}}^{2})(y_{1}\otimes y_{2}))\\
  &=&\displaystyle\lim_{k\rightarrow +\infty}\|\frac{1}{\lambda_{n_{k}}^{1}}S_{n_{k}}^{1}y_{1}\| \|\frac{1}{\lambda_{n_{k}}^{2}}S_{n_{k}}^{2}y_{2}\| \\
  &=& 0,
\end{eqnarray*}
since the first sequence tends to 0 and the second one is bounded. Finally,
\begin{eqnarray*}
  \displaystyle\lim_{k\rightarrow +\infty}\Pi[(T^{n_{k}}S_{n_{k}})(y_{1}\otimes y_{2})-(y_{1}\otimes y_{2})]  &=& \displaystyle\lim_{k\rightarrow +\infty}\Pi[(T^{n_{k}}_{1}\otimes T^{n_{k}}_{2})(S_{n_{k}}^{1}\otimes S_{n_{k}}^{2})(y_{1}\otimes y_{2})-(y_{1}\otimes T^{n_{k}}_{2}S_{n_{k}}^{2}y_{2})+\\
  &&(y_{1}\otimes T^{n_{k}}_{2}S_{n_{k}}^{2}y_{2})-(y_{1}\otimes y_{2})]  \\
  &=& \displaystyle\lim_{k\rightarrow +\infty}\Pi[((T^{n_{k}}_{1}S_{n_{k}}^{1}y_{1}-y_{1})\otimes T^{n_{k}}_{2}S_{n_{k}}^{2}y_{2})+y_{1}\otimes (T^{n_{k}}_{2}S_{n_{k}}^{2}y_{2}-y_{2})]\\
  &\leq& \displaystyle\lim_{k\rightarrow +\infty}\{\Pi[(T^{n_{k}}_{1}S_{n_{k}}^{1}y_{1}-y_{1})\otimes T^{n_{k}}_{2}S_{n_{k}}^{2}y_{2}]+\Pi[y_{1}\otimes (T^{n_{k}}_{2}S_{n_{k}}^{2}y_{2}-y_{2})]\}\\
   &=& \displaystyle\lim_{k\rightarrow +\infty}\{\|T^{n_{k}}_{1}S_{n_{k}}^{1}y_{1}-y_{1}\| \|T^{n_{k}}_{2}S_{n_{k}}^{2}(y_{2})\|+\|y_{1}\| \|T^{n_{k}}_{2}S_{n_{k}}^{2}y_{2}-y_{2}\|\} \\
   &=& 0,
\end{eqnarray*}
which completes the proof by taking linear combinations of elementary tensors.
\end{proof}
In the following  Proposition, we show  the connection between supercyclicity of tensor products and supercyclicity of direct sums, and yields another equivalent formulation in the context of tensor products of the supercyclicity criterion.
\begin{proposition}
Let $E, F$ be a separable Banach spaces with $\dim F\geq2$ and  $T\in\mathcal{B}(E)$. The following are equivalent :
\begin{enumerate}
  \item [$(i)$] $T$ satisfies the supercyclicity criterion.
  \item [$(ii)$]$T\widehat{\otimes}I : E\widehat{\otimes}_{\pi}F\longrightarrow E\widehat{\otimes}_{\pi}F $ is supercyclic for  the projective tensor norm $\pi$.
  \item [$(iii)$]$T\oplus T: E\bigoplus E\longrightarrow E\bigoplus E$ is supercyclic.
\end{enumerate}
\end{proposition}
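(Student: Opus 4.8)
The plan is to establish the cycle $(i)\Rightarrow(ii)\Rightarrow(iii)\Rightarrow(i)$, using the machinery already developed in this section together with the direct-sum criterion \cite[Lemma 3.1]{T. Bermudez} invoked earlier.

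For $(i)\Rightarrow(ii)$ I would simply combine the example and the tensor theorem. Assuming $T$ satisfies the supercyclicity criterion, the identity $I$ on $F$ satisfies the Tensor Supercyclicity Criterion by the second part of Example \ref{example1}; hence Theorem \ref{theorem3}, applied with $T_{1}=T$ and $T_{2}=I$, shows that $T\widehat{\otimes}I$ satisfies the supercyclicity criterion on $E\widehat{\otimes}_{\pi}F$ and is therefore supercyclic.

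The substantive step is $(ii)\Rightarrow(iii)$, where the hypothesis $\dim F\geq 2$ is used, and I would transfer supercyclicity along a quasiconjugacy onto $E\bigoplus E$. Choose linearly independent $f_{1},f_{2}\in F$ and, by Hahn--Banach, functionals $g_{1},g_{2}\in F^{\ast}$ with $\langle g_{i},f_{j}\rangle=\delta_{i,j}$. For a fixed $g\in F^{\ast}$ the bilinear map $(x,f)\mapsto\langle g,f\rangle x$ on $E\times F$ is bounded with norm $\leq\|g\|$, so by the universal property of the projective norm it extends to a bounded linear slice map $P_{g}:E\widehat{\otimes}_{\pi}F\longrightarrow E$ with $P_{g}(x\otimes f)=\langle g,f\rangle x$; the governing estimate is $\|P_{g}(x\otimes f)\|=|\langle g,f\rangle|\,\|x\|\leq\|g\|\,\Pi(x\otimes f)$. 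Setting $\Psi:=P_{g_{1}}\oplus P_{g_{2}}:E\widehat{\otimes}_{\pi}F\longrightarrow E\bigoplus E$, one checks that $\Psi$ is bounded and surjective, since $u\otimes f_{1}+v\otimes f_{2}\mapsto(u,v)$, and that $\Psi\circ(T\widehat{\otimes}I)=(T\oplus T)\circ\Psi$ by evaluating both sides on an elementary tensor $x\otimes f$, where each gives $(\langle g_{1},f\rangle Tx,\langle g_{2},f\rangle Tx)$. Because $\Psi$ is continuous with dense range and intertwines the two operators, it carries a supercyclic vector $z_{0}$ of $T\widehat{\otimes}I$ to one of $T\oplus T$: the image of the dense scalar orbit $\{\alpha(T\widehat{\otimes}I)^{n}z_{0} : \alpha\in\mathbb{C},\, n\in\mathbb{N}\}$ is the scalar orbit $\{\alpha(T\oplus T)^{n}\Psi z_{0} : \alpha\in\mathbb{C},\, n\in\mathbb{N}\}$, which is then dense in $\overline{\Psi(E\widehat{\otimes}_{\pi}F)}=E\bigoplus E$. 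Thus $T\oplus T$ is supercyclic.

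Finally, $(iii)\Rightarrow(i)$ is exactly \cite[Lemma 3.1]{T. Bermudez}, already used in the proofs of Theorems \ref{theorem1} and \ref{theorem2}: supercyclicity of $T\oplus T$ forces $T$ to satisfy the supercyclicity criterion, closing the cycle. The only point requiring care is the construction in $(ii)\Rightarrow(iii)$, namely verifying that the slice maps $P_{g}$ are well defined and bounded on the \emph{completed} tensor product $E\widehat{\otimes}_{\pi}F$ and not merely on the algebraic tensor product. This is precisely where the definition of $\Pi$ and the density of $E\otimes F$ in $E\widehat{\otimes}_{\pi}F$ are needed; everything else---surjectivity, the intertwining identity, and the transfer of supercyclicity---is routine.
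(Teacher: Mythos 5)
Your proposal is correct and follows essentially the same route as the paper: $(i)\Rightarrow(ii)$ via Theorem \ref{theorem3} with $T_{2}=I$, $(iii)\Rightarrow(i)$ via \cite[Lemma 3.1]{T. Bermudez}, and $(ii)\Rightarrow(iii)$ via the slice-map quasiconjugacy $\Psi=P_{g_{1}}\oplus P_{g_{2}}$, which is exactly the paper's map $\varphi$. If anything, you are slightly more careful than the paper in verifying that the slice maps are bounded on the completed projective tensor product rather than only on the algebraic one.
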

\begin{proof}
$(i)\Rightarrow(ii)$ Is a consequence of Theorem \ref{theorem3} by taking $T_2=I$.\\
$(iii)\Rightarrow(i)$ See \cite[Lemma 3.1]{T. Bermudez}.\\
$(ii)\Rightarrow(iii)$ Since $\dim F\geq2$, let $x_{1}^{\ast}, x_{2}^{\ast}\in F^{\ast}$ and consider the following commutative diagram :
 $$  \xymatrix{
    E\widehat{\bigotimes_{\pi}}F \ar[r]^{T\widehat{\otimes}I} \ar[d]_\varphi  & E\widehat{\bigotimes_{\pi}}F \ar[d]^\varphi \\
    E\bigoplus E \ar[r]_{T\oplus T} & E\bigoplus E
  }$$
where $\displaystyle\varphi(\sum_{i\leq N} e_{i}\otimes x_{i}):=(\sum_{i\leq N} \langle x_{i}, x_{1}^{\ast}\rangle e_{i}, \sum_{i\leq N} \langle x_{i}, x_{2}^{\ast}\rangle e_{i})$. $\varphi$ is surjective. Indeed, Let $e_{1}, e_{2}\in E$, we take $x_{1}, x_{2}\in F$ such that $x_{i}^{\ast}(x_{j})=\delta_{i, j}$, so we have $\varphi(e_{1}\otimes x_{1}+e_{2}\otimes x_{2})=(e_{1}, e_{2})$.
Let $\displaystyle u=\sum_{i\leq N} e_{i}\otimes x_{i}\in E\widehat{\bigotimes_{\pi}}F$, then
\begin{eqnarray*}
  (T\oplus T\circ \varphi)(u) &=& (T\oplus T\circ \varphi)(\sum_{i\leq N} e_{i}\otimes x_{i}) \\
   &=&  (T\oplus T)(\sum_{i\leq N} \langle x_{i}, x_{1}^{\ast}\rangle e_{i}, \sum_{i\leq N} \langle x_{i}, x_{2}^{\ast}\rangle e_{i}) \\
   &=& (\sum_{i\leq N} \langle x_{i}, x_{1}^{\ast}\rangle Te_{i}, \sum_{i\leq N} \langle x_{i}, x_{2}^{\ast}\rangle Te_{i}) \\
    &=& (\sum_{i\leq N} (Te_{i}\otimes x_{1}^{\ast}) x_{i} , \sum_{i\leq N} (Te_{i}\otimes x_{2}^{\ast})x_{i}) \\
   &=& (\sum_{i\leq N} (T\widehat{\otimes} I)(e_{i}\otimes x_{1}^{\ast})x_{i},\sum_{i\leq N} (T\widehat{\otimes} I)(e_{i}\otimes x_{2}^{\ast})x_{i})\\
    &=& \varphi\circ(T\widehat{\otimes} I)(\sum_{i\leq N} e_{i}\otimes x_{i})\\
    &=& \varphi\circ(T\widehat{\otimes} I)(u).
\end{eqnarray*}
Thus, $T\oplus T$ is supercyclic on $E\bigoplus E$.
\end{proof}


\begin{thebibliography}{00}

\bibitem{Bayart F}Bayart F, Matheron E. Dynamics of Linear Operators. New York, NY, USA: Cambridge University Press, 2009.
\bibitem{T. Bermudez}T. Bermudez, A. Bonilla, A. Peris. On Hypercyclicity and Supercyclicity Criterias. Bull. Austral. Math. Soc., 70 (2004), pp 45-54.
\bibitem{Bès JP}Bes JP. Three problems on hypercyclic operators. PhD, Bowling Green State University, Bowling Green, OH, USA,1998.
\bibitem{Jose bonet}Bonet, J., Martinez-Gimenez, F., Peris, A.: Universal and chaotic multipliers on spaces of operators. J. Math. Anal. Appl. 297(2), 599-611 (2004).
\bibitem{CH}Chan, K.C. Hypercyclicity of The  Operator Algebra for a separable Hilbert Space. J. Operator Theory  by Theta ,231-244(1999).
\bibitem{Chan2001}Chan, K.C., Taylor, R.: Hypercyclic subspaces of a Banach space. Integral Equ.
Oper. Theory 41(4), 381-388 (2001).
\bibitem{Fabian}Fabian, M., Habala, P., Hajek, P., Montesinos, V., Zizler, V.: Banach Space Theory The Basis for Linear
and Nonlinear Analysis. Springer, New York (2011).
\bibitem{N. S. Feldman}N. S. Feldman. Hypercyclic and supercyclic for invertible bilateral weighted shifts. J. Math. Analysis and Appl, 1, (2001), 67-74.
\bibitem{Gethner RM Shapiro JH}Gethner RM, Shapiro JH. Universal vectors for operators on spaces of holomorphic functions. Proc Amer Math Soc 1987, 100: 281-288.
\bibitem{Gilmore2019}Gilmore, C. Dynamics of Generalised Derivations and Elementary Operators. Complex Analysis and Operator Theory, 13(1), 257-274 (2019).
\bibitem{Gilmore2017}Gilmore, C., Saksman, E., and Tylli, H. O.  Hypercyclicity properties of commutator maps. Integral Equations and Operator Theory, 87(1), 139-155 (2017).
\bibitem{Gohberg}Gohberg, I. C. and Krein, M. G.  Introduction to the Theory of Linear
Non self adjoint Operators.Transl.  Math. Monpgraphs, vol 18, Amer. Math. Soc., Providence, R. I, 1969.
\bibitem{Grosse-Erdmann KG}Grosse-Erdmann KG. Universal families and hypercyclic operators. Bull Amer Math Soc 1999; 36: 345-381.
    \bibitem{Gupta2016}Gupta, M., Mundayadan, A.: Supercyclicity in spaces of operators. Results Math. 70(1), 95–107 (2016).
\bibitem{H. M. Hilden}H. M. Hilden, L. J. Wallen. Some cyclic and non-cyclic vectors of certain operators. Indiana Univ. Math. J., 23 :557-565, 1973/74.


\bibitem{H. Jarchow} H. Jarchow, Locally Convex Spaces, B.G. Teubner, Stuttgart, 1981.
\bibitem{C. Kitai}C. Kitai, Invariant closed sets for linear operators, Dissertation, Univ. of Toronto,1982.
\bibitem{A. Peris}F. Martínez-Giménez, A. Peris, Universality and chaos for tensor products of operators, J. Approx. Theory
124 (2003) 7-24.
\bibitem{Montes-Rodriguez}Montes-Rodriguez A, Salas HN. Supercyclic subspaces: spectral theory and weighted shifts. Adv Math 2001; 163: 74-134.
    \bibitem{Petterson}Petterson, H.: Hypercyclic conjugate operators. Integral Equ. Oper. Theory
57, 413–423 (2007).
\bibitem{Rolewics S}Rolewics S. On orbits of elements. Studia Math 1969; 32: 17-22.
\bibitem{R. A. Ryan}R. A. Ryan. Introduction to tensor products of Banach spaces. Springer Monographs in Mathematics.
Springer-Verlag, London, 2002.
\bibitem{salas}Salas H., Supercyclicity and weighted shifts, Studia Math., 135 (1999), 55-74.
\bibitem{Youseffi}Youseffi, B., Rezaei, H.: Hypercyclicity on the algebra of Hilbert-Schmidt operators.
Result Math. 46, 174-180 (2004).














\end{thebibliography}
\end{document}